\newtheorem{theorem}{Theorem}[section]
\newtheorem{lemma}[theorem]{Lemma}
\newtheorem*{conjecture*}{Conjecture}
\theoremstyle{definition}
\newtheorem{definition}[theorem]{Definition}
\theoremstyle{remark}
\newtheorem*{remark*}{remark}
\author{Runbo Li}
\address{International Curriculum Center, The High School Affiliated to Renmin University of China, Beijing, China}
\email{runbo.li.carey@gmail.com}
\title[]{A remark on the distribution of $\sqrt{p}$ modulo one involving primes of special type II}
\subjclass[2020]{11N35, 11N36, 11P32} 
\keywords{Prime, Goldbach-type problems, Sieve method}
\begin{document}
	
\begin{abstract}
Let $P_{r}$ denote an integer with at most $r$ prime factors counted with multiplicity. In this paper we prove that for some $\lambda < \frac{1}{12}$, the inequality $\{\sqrt{p}\}<p^{-\lambda}$ has infinitely many solutions in primes $p$ such that $p+2=P_r$, where $r= 4, 5, 6, 7$. Specially, when $r = 4$ we obtain $\lambda = \frac{1}{15.1}$, which improves Cai's $\frac{1}{15.5}$.
\end{abstract}

\maketitle

\tableofcontents

\section{Introduction}
Let $[x]$ denote the largest integer not greater than $x$ and write $\{x\} = x - [x]$. Beginning with Vinogradov \cite{vinogradov1976}, many mathematicians have studied the inequality $\{\sqrt{p}\}<p^{-\lambda}$ with prime solutions. Now the best result is due to Harman and Lewis \cite{Harman2001GaussianPI}. In \cite{Harman2001GaussianPI} they proved that there are infinitely many solutions
in primes $p$ to the inequality $\{\sqrt{p}\}<p^{-\lambda}$ with $\lambda=0.262$, which improved the previous results of Vinogradov \cite{vinogradov1976}, Kaufman \cite{kaufman1979}, Harman \cite{harman1983} and Balog \cite{balog1983}.

On the other hand, one of the famous problems in prime number theory is the twin primes problem, which states that there are infinitely many primes $p$ such that $p+2$ is also a prime. Let $P_{r}$ denote an integer with at most $r$ prime factors counted with multiplicity. Now the best result in this aspect is due to Chen \cite{Chen1973}, who showed that there are infinitely many primes $p$ such that $p+2=P_2$.

In 2013, Cai \cite{Cai1+4} combined those two problems and considered a mixed version.
\begin{definition}
Let $M(\lambda, r)$ denotes the following statement: The inequality
\begin{equation}
\{\sqrt{p}\}<p^{-\lambda}
\end{equation}
holds for infinitely many primes $p$ such that $p+2=P_r$.
\end{definition}
In his paper \cite{Cai1+4}, he also showed that
\begin{theorem}\label{Cai}
$M(\frac{1}{15.5}, 4)$ holds true.
\end{theorem}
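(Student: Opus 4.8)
The overall plan is the one common to every result of the form $M(\lambda,r)$: detect the condition $\{\sqrt p\}<p^{-\lambda}$ by harmonic analysis, turn it into an average over primes in arithmetic progressions of exponential sums weighted by $e(h\sqrt p)$, convert those estimates into a level of distribution for the shifted primes $p+2$, and then run a weighted linear sieve. Concretely, it suffices to produce, for arbitrarily large $N$, a single prime $p\in(N,2N]$ with $\{\sqrt p\}<p^{-\lambda}$ and $\Omega(p+2)\le 4$. Since $\{\sqrt p\}<p^{-\lambda}$ is equivalent to $p\in[m^2,\,m^2+c\,m^{1-2\lambda})$ with $m=\lfloor\sqrt p\rfloor$, i.e.\ to $p$ lying in a union of short intervals of total length $\asymp N^{1-\lambda}$ around the squares in $(\sqrt N,\sqrt{2N}]$, I introduce
$$\mathcal A=\set{\,p+2\ :\ N<p\le 2N,\ \{\sqrt p\}<p^{-\lambda}\,},\qquad X:=|\mathcal A|\asymp \frac{N^{1-\lambda}}{\log N},$$
and seek a nonnegative weight $w$, assembled from a Richert/Chen--type weighted linear sieve for $p+2$, such that $\sum_{n\in\mathcal A}w(n)>0$; by construction this forces some $n=p+2\in\mathcal A$ with $\Omega(n)\le 4$.

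To run the sieve I need two facts about $\mathcal A$: the main term $|\mathcal A_q|=\frac{\omega(q)}{q}X+r_q$, where $\mathcal A_q=\set{n\in\mathcal A:q\mid n}$ and $\omega$ is the natural multiplicative density ($\omega(p)=p/(p-1)$ for odd $p$, $\omega(2)=0$, so the sifting density is $\kappa=1$ and the linear sieve applies), and a level of distribution $\sum_{q\le D}\mu^2(q)|r_q|\ll X(\log N)^{-A}$ with $D=N^{\alpha}$ and $\alpha=\alpha(\lambda)$ as large as possible. I would produce both by approximating the indicator of $\{x\}<\delta$ by a Vaaler trigonometric polynomial of degree $H$,
$$\mathbf 1_{\{x\}<\delta}=\delta+\sum_{0<|h|\le H}a_h\,e(hx)+R_H(x),\qquad |a_h|\ll\min\of{\delta,\tfrac1{|h|}},$$
with $R_H$ a trigonometric polynomial of the same degree and small coefficients; splitting the $p$-range dyadically so that $\delta=p^{-\lambda}$ is essentially constant, and choosing $H=N^{\lambda+\ve}$, makes the $R_H$-contribution negligible against $\delta X$. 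The $\delta$-term supplies the stated main term through the prime number theorem in progressions, so everything is reduced to
$$\sum_{q\le D}\ \sum_{0<|h|\le H}|a_h|\ \left|\sum_{\substack{N<p\le 2N\\ p\equiv -2\pmod q}}e\of{h\sqrt p}\right|\ \ll\ X(\log N)^{-A}.$$

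For this estimate I would pass to Dirichlet characters modulo $q$ (or feed the $q$-average into the large sieve in the Bombieri--Vinogradov manner), apply Vaughan's identity to $\sum_n\Lambda(n)\chi(n)e(h\sqrt n)$, and reduce to Type~I sums $\sum_m c_m\sum_\ell e(h\sqrt{m\ell})$ and Type~II bilinear sums $\sum_m a_m\sum_\ell b_\ell\,e(h\sqrt{m\ell})$ over suitable ranges. Each inner $\ell$-sum is treated by the Kuzmin--Landau and van der Corput $A/B$-processes (the phase $h\sqrt{m\ell}$ has second derivative of size $\asymp h\sqrt m\,\ell^{-3/2}$), the Type~II sums requiring in addition a Weyl--Cauchy differencing step that leaves incomplete Kloosterman-type sums to be bounded by Weil's estimate. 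Summing over $h\le H$ and $q\le D$ and optimizing the dyadic splitting against $H=N^{\lambda+\ve}$ yields an explicit admissible level $\alpha(\lambda)$; feeding $X$, the density $\omega$ and $D=N^{\alpha(\lambda)}$ into the weighted linear sieve then gives $\Omega(p+2)\le 4$ as soon as $\alpha(\lambda)$ exceeds a threshold determined by the sieve's logarithmic integrals, and a direct check that $\lambda=\tfrac1{15.5}$ lies under the corresponding cutoff finishes the proof.

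I expect the main difficulty to be this estimation of $\sum_{p\equiv-2\,(q)}e(h\sqrt p)$, uniformly over $h\le H$ and $q\le N^{\alpha}$. The tension is that $H$ must be at least $N^{\lambda}$ merely to detect $\{\sqrt p\}<p^{-\lambda}$, $q$ should be as large as possible to feed a good weighted sieve, and the Type~II ranges coming out of Vaughan's identity must still be short enough for the van der Corput and Weil bounds to beat the trivial estimate --- and these three demands compete. Balancing them is exactly what pins down the constant $\tfrac1{15.5}$, and any slack in the exponential-sum inputs or in the sieve weights translates directly into a smaller admissible $\lambda$, which is the kind of sharpening this paper pursues.
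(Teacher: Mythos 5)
Your first half---detecting $\{\sqrt p\}<p^{-\lambda}$ with a Vaaler polynomial, reducing to exponential sums $\sum_{p\equiv -2\,(q)}e(h\sqrt p)$ via Vaughan's identity, van der Corput and Weil, and packaging the outcome as a level of distribution $D=x^{1/4-\lambda-\ve}$ for $\mathcal A=\{p+2\}$---is in outline the content of Lemma~\ref{mean} (Cai's mean value theorem), and that part of the plan is sound. The gap is in the sieve step. The elements of $\mathcal A$ have size $\asymp x$ while the level is only $D=x^{1/4-\lambda}\approx x^{0.186}$ at $\lambda=\frac1{15.5}$, so $u=\log x/\log D\approx 5.4$; at that level the plain lower-bound linear sieve gives only $P_{\lfloor 8/(1-4\lambda)\rfloor}=P_{10}$ (this is exactly Theorem~\ref{Li}), and Richert-type weights, while better, still fall well short of $P_4$. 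Your proposal never confronts this numerology: the promised ``direct check that $\lambda=\tfrac1{15.5}$ lies under the cutoff'' would fail for any weighted sieve driven solely by the level of distribution of $\mathcal A$.

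What actually makes $P_4$ accessible is Chen's switching principle applied to the subtracted term: one writes $S_4\ge S(\mathcal A;\mathcal P,x^{1/14})-\sum S(\mathcal A_{p_1p_2p_3p_4};\mathcal P(p_1p_2p_3),p_4)+O(x^{13/14})$ and re-interprets the second sum as a sieve over the \emph{switched} set $\mathcal B^{1}=\{n-2:\ n=p_1p_2p_3p_4m\in\mathcal N,\ \{\sqrt{n-2}\}<(n-2)^{-\lambda}\}$, to which the \emph{upper}-bound linear sieve is applied up to $D^{1/2}$. This demands a second, genuinely different arithmetic input: a mean value theorem for the switched sequence (Lemma~\ref{newmean}, Cai's Lemma~5), i.e.\ equidistribution in progressions of the products $p_1p_2p_3p_4m$ subject to the fractional-part condition on $\sqrt{n-2}$. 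This is precisely the ingredient the paper singles out as the source of the improvement, and it is not a corollary of the Bombieri--Vinogradov analogue for primes that you construct. Your plan contains neither the switching step nor any substitute for this second mean value theorem, so as written it cannot reach $P_4$ at $\lambda=\tfrac1{15.5}$.
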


In 2017, Dunn \cite{Dunn} considered a similar problem and improved Cai's result concerning the number of prime divisors of $p + 2$. Let $\alpha, \beta \in \mathbb{R}$ with $\alpha \neq 0$, and let $\|x\|$ denote the distance from $x$ to the nearest integer. He obtained that if $0 < \gamma < 1$ and $\theta < \frac{\gamma}{10}$, then there are infinitely many primes $p$ such that
$$
\|\alpha p^{\gamma} + \beta \| < p^{- \theta} \quad \text{and} \quad p + 2 = P_3.
$$

In 2024, Li \cite{LRBsqrtp1} generalized Cai's result to a wider range of $\lambda$. He got
\begin{theorem}\label{Li}
$M(\lambda, \lfloor \frac{8}{1-4\lambda} \rfloor)$ holds true for all $0<\lambda<\frac{1}{4}$.
\end{theorem}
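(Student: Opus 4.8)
The plan is to reduce the statement, by Fourier analysis, to a Bombieri--Vinogradov--type mean value theorem for the exponential sum $\sum_{p}e(h\sqrt p)$ over primes in arithmetic progressions, and then to feed that into the Jurkat--Richert linear (one-dimensional) sieve in its lower-bound form; working with the plain linear sieve rather than a weighted sieve is what keeps the argument uniform in $\lambda$, at the cost of the large value $r=\lfloor\frac{8}{1-4\lambda}\rfloor$. Fix $\lambda\in(0,\tfrac14)$, put $r=\lfloor\frac{8}{1-4\lambda}\rfloor$, work in a dyadic block $p\in(\tfrac x2,x]$, and form the sifting sequence
\[
\mathcal C=\mathcal C(x)=\bigl\{\,p+2\ :\ \tfrac x2<p\le x,\ p\ \text{prime},\ \{\sqrt p\}<p^{-\lambda}\,\bigr\},
\]
whose elements lie in $(2,x+2]$ and whose cardinality $X:=\#\mathcal C$ is $\asymp x^{1-\lambda}/\log x$ (this order of magnitude comes from the classical bound $\sum_{p\sim x}e(h\sqrt p)\ll_B x^{1-\lambda-\varepsilon}$ for $1\le h\le x^{\lambda}$, i.e. from the estimates of Vinogradov, Kaufman, Balog and Harman underlying the $\{\sqrt p\}<p^{-\lambda}$ results quoted in the introduction; for $\lambda<\tfrac14$ the crude van der Corput bound already suffices here). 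Since $\ell\mid p+2$ forces $p\equiv-2\,(\ell)$ --- a nonzero residue for odd $\ell$, and impossible for $\ell=2$ other than $p=2$ --- the sifting density is $g(\ell)=1/(\ell-1)$ for odd $\ell$ and $g(2)=0$, so $\sum_{\ell<z}g(\ell)\log\ell=\log z+O(1)$: $\mathcal C$ is a linear sieve problem with a bounded singular series. Hence it suffices to show that $\mathcal C$ has level of distribution $x^\theta$ for every $\theta<\frac{1-4\lambda}{4}$, that is,
\[
\sum_{\substack{q\le x^\theta\\(q,2)=1}}\Bigl|\ \#\{n\in\mathcal C:q\mid n\}-g(q)X\ \Bigr|\ \ll_A\ \frac{X}{(\log x)^A}\qquad(\text{all }A>0).
\]

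To obtain this I would first remove the Diophantine condition: after a routine splitting of $(\tfrac x2,x]$ that neutralises the mild dependence of $p^{-\lambda}$ on $p$, Vaaler's (Beurling--Selberg) trigonometric approximation replaces $\mathbf 1[\{\sqrt p\}<p^{-\lambda}]$ by $\delta+\sum_{1\le|h|\le H}c_h\,e(h\sqrt p)$ with $\delta\asymp x^{-\lambda}$, $H=x^{\lambda}(\log x)^{A+10}$ and $|c_h|\ll\min(\delta,1/|h|)$, the truncation error contributing $\ll X(\log x)^{-A}$. Since $\#\{n\in\mathcal C:q\mid n\}=\#\{p\sim x:\{\sqrt p\}<p^{-\lambda},\ p\equiv-2\,(q)\}$, this reduces the level estimate to the bound
\[
\sum_{q\le x^\theta}\Bigl|\ \sum_{\substack{\frac x2<p\le x\\p\equiv-2\,(q)}}e(h\sqrt p)\ -\ \frac{1}{\phi(q)}\sum_{\substack{\frac x2<p\le x\\(p,q)=1}}e(h\sqrt p)\ \Bigr|\ \ll_A\ \frac{x}{(\log x)^A}
\]
uniformly for $1\le h\le H$ --- in substance the mean value theorem already used by Cai \cite{Cai1+4}. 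One proves it by removing the von Mangoldt weight with Vaughan's identity (or the Heath-Brown identity). The Type I sums $\sum_{d\le x^{1/3}}a_d\sum_{de\sim x,\ de\equiv-2\,(q)}e(h\sqrt{de})$ are handled by completing the inner progression modulo $q$ and applying the van der Corput $B$-process (second-derivative test) to the phase $h\sqrt{de}+\ell e/q$, whose second derivative in $e$ is of size $\asymp hd^2x^{-3/2}$; these leave room to spare. The Type II sums, with both factors in $[x^{1/3},x^{2/3}]$, are handled by Cauchy--Schwarz in the shorter variable, followed by completion modulo $q$ and the $B$-process applied to the difference phase $h(\sqrt{e_1}-\sqrt{e_2})\sqrt d$, whose second derivative in $d$ is $\asymp h|e_1-e_2|D_1^{-3/2}D_2^{-1/2}$; optimising the Vaughan ranges against $q$ and $h$ here is precisely what forces the restriction $\theta<\frac{1-4\lambda}{4}$.

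With the level in hand I would finish with the linear sieve lower bound: taking $D=x^\theta$, $z=x^{\theta/(2+\eta)}$ for a small fixed $\eta>0$ and $s=\log D/\log z=2+\eta$,
\[
S(\mathcal C,z)\ \ge\ X\,V(z)\,\bigl(f(2+\eta)+o(1)\bigr)\ \gg\ \frac{x^{1-\lambda}}{(\log x)^2}\ >\ 0,
\]
because $V(z)\asymp(\log z)^{-1}$ and the linear sieve function obeys $f(s)>0$ for $s>2$. Every $n=p+2$ surviving in $S(\mathcal C,z)$ lies in $(2,x+2]$ and has no prime factor below $z=x^{\theta/(2+\eta)}$, hence $\Omega(n)\le\lfloor(2+\eta)/\theta\rfloor$; choosing $\theta$ close enough to $\frac{1-4\lambda}{4}$ and $\eta$ small enough gives $\lfloor(2+\eta)/\theta\rfloor=\lfloor\frac{8}{1-4\lambda}\rfloor=r$ (valid whether or not $\frac{8}{1-4\lambda}\in\Z$, since $(2+\eta)/\theta$ can be made to exceed $\frac{8}{1-4\lambda}$ by an arbitrarily small margin). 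Thus every large $x$ supplies a prime $p\in(\tfrac x2,x]$ with $\{\sqrt p\}<p^{-\lambda}$ and $p+2=P_r$; letting $x\to\infty$ through powers of $2$ yields infinitely many, which is exactly $M(\lambda,r)$.

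The main obstacle is the level-of-distribution estimate, and inside it the Type II bound: because the conclusion must hold for every $\lambda\in(0,\tfrac14)$, the decisive case is $\lambda$ near $0$, where $\theta$ must be pushed essentially up to $\tfrac14$ for the shifted primes clustering near the perfect squares --- the regime in which the van der Corput / exponent-pair estimates for $\sum_{p\equiv-2\,(q)}e(h\sqrt p)$, kept simultaneously uniform in $q\le x^\theta$ and in $h$ up to $x^\lambda$, are essentially tight. Balancing the Vaughan ranges and retaining this uniformity throughout is the delicate part; the Vaaler smoothing, the singular-series computation and the linear-sieve bookkeeping are all routine.
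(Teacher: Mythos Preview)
The paper does not contain a proof of this theorem: it is quoted as a prior result of Li \cite{LRBsqrtp1} and the paper's own contribution is Theorem~\ref{t1}. So there is no ``paper's own proof'' to compare against directly.

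That said, your proposal is correct in outline and is exactly the natural argument. The two technical ingredients you need are already isolated in the paper as black boxes: the level-of-distribution statement you set out to prove is precisely Lemma~\ref{mean} (Cai's Lemma~4), which gives level $x^{\theta}$ for every $\theta<\tfrac14-\lambda$, and the lower-bound linear sieve is Lemma~\ref{sieve}. Given those lemmas, your sieve bookkeeping is right: with $D=x^{\theta}$ and $z=D^{1/(2+\eta)}$ one has $f(2+\eta)>0$, so $S(\mathcal C,z)\gg x^{1-\lambda}/(\log x)^2$, and any survivor $n=p+2\le x+2$ with no prime factor below $z$ satisfies $\Omega(n)\le\lfloor(2+\eta)/\theta\rfloor$, which for $\theta$ close enough to $\tfrac14-\lambda$ and $\eta$ small enough equals $\lfloor 8/(1-4\lambda)\rfloor$. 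Your handling of the borderline integer case is also fine.

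Two minor remarks. First, since Lemma~\ref{mean} is available you do not actually need the Vaaler smoothing, Vaughan decomposition and van der Corput analysis you sketch --- you can simply cite it; what you describe is a plausible proof of that lemma, not an alternative route to the theorem. Second, your identification of the ``main obstacle'' as the Type~II range forcing $\theta<\tfrac14-\lambda$ is consistent with Cai's proof of Lemma~\ref{mean}, and this is indeed where the exponent $8/(1-4\lambda)$ ultimately comes from. In short: the approach is the standard one, it is the same as what the paper's machinery (Lemmas~\ref{sieve} and~\ref{mean}) would give, and it is presumably what Li does in \cite{LRBsqrtp1}.
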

In \cite{LRBsqrtp1}, Li mentioned that Cai \cite{Cai1+4} actually prove a new mean value theorem (see [\cite{Cai1+4}, Lemma 5]) for this problem and it may be useful on improving the results. In the present paper, we shall make use of this mean value theorem and improve previous results.
\begin{theorem}\label{t1}
$M(\frac{1}{15.1}, 4)$, $M(\frac{1}{12.4}, 5)$, $M(\frac{1}{12.03}, 6)$ and $M(\frac{1}{12.01}, 7)$ hold true.
\end{theorem}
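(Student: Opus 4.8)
The plan is to treat
\[
\mathcal{A} \;=\; \set{\, p + 2 \;:\; p \le x \text{ prime},\ \{\sqrt p\} < p^{-\lambda} \,\}
\]
as a sieving sequence and to deduce from a weighted linear sieve that $\mathcal{A}$ contains $\gg x^{1-\lambda}(\log x)^{-2}$ integers with at most $r$ prime factors. Since $\{\sqrt p\} < p^{-\lambda}$ is equivalent, up to an error that can be absorbed, to $p$ lying in one of the intervals $[m^{2},\, m^{2} + 2m^{1-2\lambda})$, the first step is to assemble the arithmetic input the sieve requires: an asymptotic formula of the right order for $|\mathcal{A}| \asymp x^{1-\lambda}/\log x$ (which in this range of $\lambda$ follows from Vinogradov-type estimates, cf.\ \cite{Harman2001GaussianPI}), together with a Bombieri--Vinogradov type bound
\[
\sum_{d \le D} \mu^{2}(d)\, 3^{\nu(d)} \left| \, |\mathcal{A}_{d}| - \frac{\omega(d)}{d}\,|\mathcal{A}| \, \right| \;\ll\; \frac{|\mathcal{A}|}{(\log x)^{A}}
\]
for every fixed $A$, where $\mathcal{A}_{d} = \set{n \in \mathcal{A} : d \mid n}$, $\omega$ is the usual density attached to the twin prime problem, and $D = x^{\theta}$ with $\theta = \theta(\lambda)$ as large as possible. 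Writing the indicator of $\{\sqrt p\} < p^{-\lambda}$ as a Fourier series with a Beurling--Selberg/Vaaler smoothing reduces the whole estimate to bounds for exponential sums $\sum_{n} a_{n}\, e(h\sqrt n)$ over primes in arithmetic progressions, treated by Type I/Type II decompositions and exponent-pair (van der Corput) estimates; this is precisely the content of Cai's mean value theorem \cite[Lemma~5]{Cai1+4}, which was not used in \cite{LRBsqrtp1} and whose deployment is what lets us replace the values of $r$ in Theorem~\ref{Li} by $4,5,6,7$.

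With the level of distribution $\theta(\lambda)$ in hand, the second step is a weighted linear sieve on $\mathcal{A}$. One attaches to each $n \in \mathcal{A}$ a lower-bound weight of the shape
\[
W(n) \;=\; 1 - \tfrac12 \sum_{\substack{z_{0} \le q < y \\ q \mid n}} \Bigl(1 - \frac{\log q}{\log y}\Bigr) - (\text{switching corrections}),
\]
sifts $\mathcal{A}$ by the primes below $z_{0} = x^{1/u}$, and evaluates the resulting main and remainder sums by the Jurkat--Richert linear sieve together with Buchstab's identity; where it is profitable one also uses the reversal of the roles of the two variables (Chen's switching trick, for which one needs a bilinear version of the same exponential-sum estimates) to recover, rather than discard, the terms in which $n$ has three or more prime factors below $y = x^{1/v}$. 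This produces an inequality
\[
\#\set{\, n \in \mathcal{A} \;:\; \Omega(n) \le r \,} \;\ge\; \bigl(c_{r}(\theta(\lambda)) + o(1)\bigr)\, \frac{|\mathcal{A}|}{\log x},
\]
so that Theorem~\ref{t1} will follow once one verifies $c_{4}(\theta(\tfrac{1}{15.1})) > 0$, $c_{5}(\theta(\tfrac{1}{12.4})) > 0$, $c_{6}(\theta(\tfrac{1}{12.03})) > 0$ and $c_{7}(\theta(\tfrac{1}{12.01})) > 0$; this is carried out by choosing $u, v$ optimally and evaluating the relevant Buchstab integrals numerically.

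The main obstacle is this last, quantitative, step, in two parts. First one must extract from \cite[Lemma~5]{Cai1+4} the sharpest admissible exponent $\theta(\lambda)$, carefully tracking how the exponent-pair bounds depend on the interval length $x^{1/2-\lambda}$, on the size of the frequency $h$ (which ranges up to about $x^{\lambda}$), and on the modulus $d$. Then one must check that the weighted-sieve constant stays positive with the claimed numerical values; the Chen-type reversal of roles is essential here, since the plain Richert weight by itself would force a strictly larger $r$. Finally, the restriction $\lambda < \tfrac{1}{12}$ is exactly the range in which the exponential-sum input behind Lemma~5 yields a positive level of distribution: as $\lambda \uparrow \tfrac{1}{12}$ the admissible $\theta(\lambda)$ tends to $0$, so the least $r$ for which the sieve inequality holds tends to infinity, and the four displayed pairs $(\lambda, r)$ are the break-even points of this trade-off.
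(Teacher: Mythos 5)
Your outline correctly identifies the general machinery (a linear sieve on $\mathcal{A}=\{p+2\}$ at level $D=x^{\theta}$, Chen's switching for the terms with too many prime factors, and a final numerical check), but as written it is a plan rather than a proof, and the plan has two concrete defects. First, the entire quantitative content is deferred: the theorem \emph{is} the four numerical claims, and you never specify the weights, the sifting level, or the Buchstab integrals whose positivity is to be verified. The paper does all of this explicitly: it takes $D=x^{1/4-\lambda-\varepsilon}$ (the exponent $\tfrac14-\lambda$ is already stated in Cai's Lemmas 4 and 5, so there is no further ``extraction of the sharpest $\theta(\lambda)$'' to be done), sifts $\mathcal{A}$ to $z=x^{1/14}$ (for $r=4$) or $z=x^{1/12}$ (for $r=5,6,7$) with \emph{no} Richert weights at all, subtracts $\sum S(\mathcal{A}_{p_1\cdots p_r};\mathcal{P}(p_1\cdots p_{r-1}),p_r)$, switches that subtracted sum into $S(\mathcal{B};\mathcal{P},D^{1/2})$ built on the set $\mathcal{N}$ of Lemma~\ref{newmean} --- note that Lemma~\ref{newmean} is the equidistribution input for the \emph{switched} set, not for $\mathcal{A}$ itself, which is handled by Lemma~\ref{mean}; your write-up conflates the two --- and reduces everything to the explicit inequality $e^{-\gamma} s f(s)-2T_r>0$ with $s=(\tfrac14-\lambda)\big/\tfrac1{12}$ (resp.\ $\big/\tfrac1{14}$) and $T_r$ an explicit $r$-fold Buchstab integral. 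Your Richert-weight variant would lead to a different main term and a different (unperformed) numerical computation, so it cannot simply be asserted to yield these particular constants.

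Second, your explanation of the barrier $\lambda<\tfrac1{12}$ is incorrect: the admissible level of distribution in Cai's lemmas is $\theta<\tfrac14-\lambda$, which tends to $\tfrac16$, not to $0$, as $\lambda\uparrow\tfrac1{12}$. The actual obstruction is that the mean value theorem for the switched set forces the prime variables, and hence the sifting level, to be at least $x^{1/12}$, and the lower-bound sieve function $f\bigl(\log D/\log z\bigr)=f\bigl(12(\tfrac14-\lambda)\bigr)$ vanishes unless $12(\tfrac14-\lambda)>2$, i.e.\ unless $\lambda<\tfrac1{12}$. This is not cosmetic: it is what determines which pairs $(\lambda,r)$ are reachable, and the mechanism you describe would predict a different trade-off.
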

We mention that $\lambda = \frac{1}{12}$ is near the limit of our method that we will explain later.

\section{Preliminary lemmas}
Let $\mathcal{A}$ denote a finite set of positive integers and $z \geqslant 2$. For square-free $d$, put
$$
\mathcal{P}=\{p : (p, 2)=1\},\quad
\mathcal{P}(r)=\{p : p \in \mathcal{P},\ (p, r)=1\},
$$
$$
P(z)=\prod_{\substack{p\in \mathcal{P}\\p<z}} p,\quad
\mathcal{A}_{d}=\{a : a \in \mathcal{A},\ d \mid a\},\quad
S(\mathcal{A}; \mathcal{P},z)=\sum_{\substack{a \in \mathcal{A} \\ (a, P(z))=1}} 1.
$$

\begin{lemma}\label{sieve} ([\cite{Rosser}, Pages 205--209]). Suppose that every $\left|\mathcal{A}_{d}\right|$ can be written as
$$
\left|\mathcal{A}_{d}\right| = \frac{\omega(d)}{d} X_{\mathcal{A}} + \eta(X_{\mathcal{A}}, d),
$$
where $\omega(d)$ is a multiplicative function, $0 \leqslant \omega(p)<p, X_{\mathcal{A}}>1$ is independent of $d$. Assume further that
$$
\sum_{z_{1} \leqslant p<z_{2}} \frac{\omega(p)}{p}=\log \frac{\log z_{2}}{\log z_{1}}+O\left(\frac{1}{\log z_{1}}\right), \quad z_{2}>z_{1} \geqslant 2.
$$
Then
$$
S(\mathcal{A}; \mathcal{P}, z) \geqslant X_{\mathcal{A}} W(z)\left\{f\left(\frac{\log D}{\log z}\right)+O\left(\frac{1}{\log ^{\frac{1}{3}} D}\right)\right\}-\sum_{\substack{d \leqslant D \\ d \mid P(z)}}\left|\eta(X_{\mathcal{A}}, d)\right|,
$$
$$
S(\mathcal{A}; \mathcal{P}, z) \leqslant X_{\mathcal{A}} W(z)\left\{F\left(\frac{\log D}{\log z}\right)+O\left(\frac{1}{\log ^{\frac{1}{3}} D}\right)\right\}+\sum_{\substack{d \leqslant D \\ d \mid P(z)}}\left|\eta(X_{\mathcal{A}}, d)\right|,
$$
where $D$ is a power of $z$,
$$
W(z) = \prod_{p \mid P(z)}\left(1-\frac{\omega(p)}{p}\right),
$$
and $f(s)$ and $F(s)$ are determined by the following differential-difference equation
\begin{align*}
\begin{cases}
F(s)=\frac{2 e^{\gamma}}{s}, \quad f(s)=0, \quad &0<s \leqslant 2,\\
(s F(s))^{\prime}=f(s-1), \quad(s f(s))^{\prime}=F(s-1), \quad &s \geqslant 2 .
\end{cases}
\end{align*}
\end{lemma}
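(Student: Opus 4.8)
The plan is to prove the two bounds by constructing explicit combinatorial sieve weights and then extracting the functions $f$ and $F$ from the resulting main terms. I would introduce two sequences of real numbers $\lambda_d^{\pm}$, supported on squarefree $d \mid P(z)$ with $d \leqslant D$ and satisfying $|\lambda_d^{\pm}| \leqslant 1$, designed so that for every $m \mid P(z)$ one has $\sum_{d \mid m} \lambda_d^{-} \leqslant \sum_{d \mid m} \mu(d) \leqslant \sum_{d \mid m} \lambda_d^{+}$. Since $\sum_{d \mid m} \mu(d)$ is the indicator of $m = 1$, applying this with $m = (a, P(z))$ and summing over $a \in \mathcal{A}$ sandwiches $S(\mathcal{A}; \mathcal{P}, z)$ between the two quantities $\sum_{d \mid P(z)} \lambda_d^{\pm} |\mathcal{A}_d|$. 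Substituting the hypothesis $|\mathcal{A}_d| = \frac{\omega(d)}{d} X_{\mathcal{A}} + \eta(X_{\mathcal{A}}, d)$ then separates each side into a main term $X_{\mathcal{A}} \sum_d \lambda_d^{\pm} \frac{\omega(d)}{d}$ and a remainder; because $|\lambda_d^{\pm}| \leqslant 1$ and the weights live on $d \leqslant D$, $d \mid P(z)$, the remainder is bounded by $\sum_{d \leqslant D,\, d \mid P(z)} |\eta(X_{\mathcal{A}}, d)|$, which is exactly the error term displayed in the statement. Everything then reduces to evaluating the model sums $V^{\pm}(D, z) = \sum_d \lambda_d^{\pm} \frac{\omega(d)}{d}$.

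For the weights themselves I would use Rosser's construction. Writing a squarefree $d \mid P(z)$ as $d = p_1 p_2 \cdots p_r$ with $p_1 > p_2 > \cdots > p_r$, I keep $d$ with weight $\mu(d)$ exactly when the partial products obey the truncation condition $p_1 p_2 \cdots p_m \, p_m^2 < D$ at every even index $m$ (for the upper-bound weights $\lambda_d^{+}$) or at every odd index $m$ (for the lower-bound weights $\lambda_d^{-}$), and set the weight to $0$ otherwise. That these weights satisfy the sieve inequalities of the previous paragraph is proved by induction on the largest prime factor, using Buchstab's identity $S(\mathcal{A}; \mathcal{P}, z) = |\mathcal{A}| - \sum_{w \leqslant p < z} S(\mathcal{A}_p; \mathcal{P}, p)$ to peel off prime factors one at a time; the parity built into Rosser's condition guarantees that the truncation alternately over- and under-counts in the correct direction.

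The analytic heart is the evaluation of $V^{\pm}(D, z)$. Applying the same Buchstab identity to the model sum turns it into an iterated sum over decreasing chains of primes, and the density hypothesis $\sum_{z_1 \leqslant p < z_2} \frac{\omega(p)}{p} = \log\frac{\log z_2}{\log z_1} + O(1/\log z_1)$ — the linear, dimension-one case — lets me replace each summation over a prime $p$ by an integration against $\mathrm{d}t/t$ in the variable $t = \log p / \log D$, at the cost of the indicated error. Passing to the limit $D, z \to \infty$ with $s = \log D/\log z$ fixed, the normalized sums $V^{\pm}(D, z)/W(z)$ converge to functions that inherit, from the recursive structure of the weights, precisely the system $(sF(s))' = f(s-1)$ and $(sf(s))' = F(s-1)$ for $s \geqslant 2$; the boundary values $F(s) = 2e^{\gamma}/s$ and $f(s) = 0$ on $0 < s \leqslant 2$ come from the fundamental lemma together with the standard linear-sieve constant $2e^{\gamma}$ furnished by Mertens' theorem in dimension one. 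Carrying the accumulated error through the iteration produces the uniform term $O(\log^{-1/3} D)$.

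I expect the last step to be the main obstacle. It is not enough to show that Rosser's truncation yields \emph{some} admissible bounds; one must show that it is essentially \emph{optimal}, that is, that the iterated sums converge to the extremal solutions $f, F$ of the differential-difference equation rather than to weaker functions, and one must control the error term uniformly in $s$ as it propagates through the many levels of the Buchstab recursion. This is exactly where the delicate combinatorics of the Rosser weights and the careful analysis of the nested integrals defining $f$ and $F$ do the real work, whereas the reductions in the first two paragraphs are essentially formal.
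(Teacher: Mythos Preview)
Your outline is a faithful sketch of the standard Rosser--Iwaniec proof of the linear sieve, and there is no mathematical gap in the strategy you describe. However, there is nothing to compare against: the paper does not prove this lemma at all. It is quoted verbatim as a known result with the citation ``[\cite{Rosser}, Pages 205--209]'' and used as a black box in Section~3. So the ``paper's own proof'' consists solely of the reference, and your proposal simply reconstructs what that reference contains.
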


\begin{lemma}\label{mean} ([\cite{Cai1+4}, Lemma 4]). For any given constant $A>0$ and $0<\lambda<\frac{1}{4}, 0<$ $\theta<\frac{1}{4}-\lambda$ we have
$$
\sum_{d \leqslant x^{\theta}} \max _{(l, d)=1}\left|\sum_{\substack{x < p \leqslant 2x \\ \{\sqrt{p}\}<p^{-\lambda} \\ p \equiv l(\bmod d) }} 1-\frac{(2x)^{1-\lambda}-x^{1-\lambda}}{\varphi(d)(1-\lambda)\log x} \right| \ll \frac{x^{1-\lambda}}{\log ^{A} x}.
$$
\end{lemma}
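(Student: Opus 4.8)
The plan is to detect the condition $\{\sqrt p\}<p^{-\lambda}$ by an exact formula for the indicator and then reduce the whole estimate to a Bombieri--Vinogradov-type bound for an exponential sum over primes. Writing $\psi(t)=t-\lfloor t\rfloor-\tfrac12$ for the sawtooth function and $\delta=\delta(p)=p^{-\lambda}$, one has the identity
\[
\mathbf 1_{\{\sqrt p\}<\delta}=\delta+\psi(\sqrt p-\delta)-\psi(\sqrt p),
\]
valid since $0<\delta<1$. Inserting Vaaler's trigonometric majorant/minorant for $\psi$, with Fourier support truncated at a height $H=x^{\lambda+\ve}$, I would split the inner counting function into a main part coming from the term $\delta$ and a family of oscillatory parts attached to frequencies $1\le|h|\le H$. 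With this choice of $H$ the contribution of the truncation error, summed over residue classes and over $d\le x^{\theta}$, is $\ll x^{1-\lambda-\ve}$, which is absorbed into the claimed bound.

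For the main part I would observe that the term $\delta=p^{-\lambda}$ produces, for each admissible $l$, the weighted sum $\sum_{x<p\le 2x,\,p\equiv l(d)}p^{-\lambda}$, whose expected value is exactly $\varphi(d)^{-1}\int_x^{2x}t^{-\lambda}(\log t)^{-1}\,dt$, and a short computation identifies this integral with the stated main term $\frac{(2x)^{1-\lambda}-x^{1-\lambda}}{\varphi(d)(1-\lambda)\log x}$ up to an acceptable error. The discrepancy between $\sum_{p\equiv l(d)}p^{-\lambda}$ and this main term is handled by partial summation from the classical Bombieri--Vinogradov theorem for $\pi(t;d,l)$; because the weight $t^{-\lambda}$ has size $x^{-\lambda}$, the usual saving of $\log^{-A'}x$ over $x$ becomes a saving of $\log^{-A'}x$ over $x^{1-\lambda}$, exactly what the lemma demands. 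Since $\theta<\tfrac14-\lambda<\tfrac12$, the level $x^{\theta}$ is comfortably inside the Bombieri--Vinogradov range, so this step is routine.

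The core of the argument is the oscillatory part, which after the Vaaler expansion reduces, uniformly in $1\le h\le H$, to estimating
\[
\sum_{d\le x^{\theta}}\max_{(l,d)=1}\Bigl|\sum_{\substack{x<p\le 2x\\ p\equiv l(d)}}e(h\sqrt p)\Bigr|,
\]
the smooth factor $e(-h\delta(p))=e(-hp^{-\lambda})$ arising from $\psi(\sqrt p-\delta)$ being absorbed by partial summation. I would attach the von Mangoldt weight, pass to $\sum_n\Lambda(n)e(h\sqrt n)\mathbf 1_{n\equiv l(d)}$, and decompose it by Vaughan's identity into Type I and Type II bilinear sums. In each piece the inner variable runs over an interval in a fixed residue class modulo $d$; writing $n=a+dn'$ the phase becomes $e\bigl(h\sqrt{d}\,\sqrt{n'+c}\bigr)$, to which I would apply van der Corput's method (equivalently the exponent pair obtained from one $B$-process) to exploit the cancellation in sums of the form $\sum_{n'}e(g\sqrt{n'})$. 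The Type II sums are treated by first applying Cauchy--Schwarz in the outer variable and then feeding the same exponential-sum input, or a large-sieve inequality, into the resulting double sum. Summing the savings over the dyadic ranges of the Vaughan decomposition, over $d\le x^{\theta}$, and over $1\le h\le H$ weighted by $1/h$ is where the admissible range of $\theta$ is decided.

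The main obstacle is precisely this balancing. The phase $\sqrt n$ obeys a square-root barrier: the best unconditional cancellation in $\sum_{n\sim x}e(h\sqrt n)$ is of size roughly $x^{1/2}$, and carrying this through a residue class modulo $d$ and the bilinear Type II structure costs a factor that forces the level down to $\theta<\tfrac14-\lambda$. Here the term $-\lambda$ is the price of letting $h$ range up to $H=x^{\lambda+\ve}$, while the $\tfrac14$ reflects the second-derivative (van der Corput) gain for the phase $\sqrt{\cdot}$; letting $\ve\to0$ recovers the stated strict inequality. I expect the Type II sums, together with the uniformity in $l$ (most cleanly obtained by detecting the progression with Dirichlet characters and invoking the large sieve), to be the delicate points. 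Once the exponential-sum estimates are arranged to save a fixed small power of $x$ beyond the trivial bound throughout the range $d\le x^{\theta}$, $h\le H$, summation yields a total error $\ll x^{1-\lambda}\log^{-A}x$, completing the proof.
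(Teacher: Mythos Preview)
The paper does not supply its own proof of this lemma: it is quoted verbatim as Lemma~4 of Cai~\cite{Cai1+4} and used as a black box. So there is no in-paper argument to compare against; what you have written is a sketch of how such a mean-value theorem is typically proved, and it is broadly in the spirit of the methods Cai, Balog and Harman use for fractional parts of $\sqrt{p}$.

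Your outline is plausible at the level of strategy: detect $\{\sqrt p\}<p^{-\lambda}$ by the Erd\H{o}s--Tur\'an/Vaaler expansion truncated at $H\asymp x^{\lambda+\ve}$, separate a smooth main term handled by Bombieri--Vinogradov after partial summation, and reduce the rest to averages of $\sum_{p\equiv l(d)}e(h\sqrt p)$ treated via Vaughan plus van der Corput, with characters and the large sieve to handle the $\max_{(l,d)=1}$. Two points would need to be made precise in an actual proof. First, the heuristic ``$\theta<\tfrac14-\lambda$ because of the square-root barrier plus the cost $H=x^{\lambda}$'' is exactly where all the work lies: one must show a genuine power saving in the Type~II range uniformly in $d\le x^{\theta}$ and $h\le H$, and your sketch does not pin down which exponent pair or large-sieve input closes the numerics at precisely $\tfrac14-\lambda$. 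Second, the Vaaler truncation error is not a flat $O(1/H)$ per point but a nonnegative trigonometric polynomial; you need to bound its sum over the progression, which introduces an extra divisor-like factor that must be controlled before summing over $d$. Neither issue is fatal, but both are where a referee would ask for details, and both are presumably carried out in Cai's paper rather than here.
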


\begin{lemma}\label{newmean} ([\cite{Cai1+4}, Lemma 5]). 
Let
$$
\mathcal{N} = \left\{ p_1 p_2 p_3 p_4 m : x^{\frac{1}{14}} \leqslant p_1 < p_2 < p_3 < p_4,\ x < p_1 p_2 p_3 p_4 m \leqslant 2 x,\ \left(m, P(p_4)\right) = 1 \right\}.
$$
Then for any given constant $A>0$ and $0<\lambda<\frac{1}{8}, 0<$ $\theta<\frac{1}{4}-\lambda$ we have
$$
\sum_{d \leqslant x^{\theta}} \max _{(l, d)=1}\left| \sum_{\substack{n \in \mathcal{N} \\ n \equiv l (\bmod d) \\ \{\sqrt{n - 2}\}<(n - 2)^{-\lambda} }} 1-\frac{1}{\varphi(d)} \sum_{\substack{n \in \mathcal{N} \\ (n, d) = 1 }} n^{-\lambda} \right| \ll \frac{x^{1-\lambda}}{\log ^{A} x}.
$$
Moreover, the lower bound $x^{\frac{1}{14}}$ for prime variables can be replaced by $x^{\frac{1}{12}}$, and the proof is similar to that in \cite{Cai1+4}.
\end{lemma}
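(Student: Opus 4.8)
The plan is to follow Cai's proof of [\cite{Cai1+4}, Lemma~5], which itself sits on top of the proof of Lemma~\ref{mean}, and then to track the ranges of the prime variables in order to obtain the $x^{1/12}$ variant. Write $e(t)=e^{2\pi i t}$ and $\delta(n)=(n-2)^{-\lambda}$, so that $\delta(n)\asymp x^{-\lambda}$ on $(x,2x]$. \emph{Step 1 (detecting the fractional part).} Following Vinogradov, sandwich the indicator of $\{\sqrt{n-2}\}<\delta(n)$ between trigonometric polynomials: for a cut-off $H=x^{\lambda}(\log x)^{2A+10}$ there exist
$$\chi^{\pm}(\alpha)=\sum_{|h|\le H}c_h^{\pm}\,e(h\alpha),\qquad c_0^{\pm}=\delta(n)+O\!\bigl(H^{-1}\bigr),\qquad |c_h^{\pm}|\ll\min\!\bigl(\delta(n),|h|^{-1}\bigr),$$
which agree with the indicator outside a set of measure $O(H^{-1})$ near the endpoints; since $\delta$ varies with $n$, one first cuts $(x,2x]$ into $O((\log x)^{4A})$ subintervals on which $\delta$ is constant up to $1+O((\log x)^{-4A})$ (or else removes the $n$-dependence of $\delta$ by partial summation). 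With this choice of $H$ the contribution of the exceptional set, after summing over $n\in\mathcal N$ and $d\le x^{\theta}$, is $O\bigl(x^{1-\lambda}(\log x)^{-A}\bigr)$. What remains is a \emph{main term} coming from $c_0^{\pm}$, together with, for each $h$ with $1\le|h|\le H$, an \emph{error term} $|c_h^{\pm}|\sum_{d\le x^{\theta}}\max_{(l,d)=1}\bigl|\sum_{n\in\mathcal N,\ n\equiv l\,(d)}e(h\sqrt{n-2})\bigr|$.

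\emph{Step 2 (the main term).} Since $c_0^{\pm}=\delta(n)+O(H^{-1})$ and $\delta(n)=n^{-\lambda}+O(n^{-\lambda-1})$, the $h=0$ part $c_0^{\pm}\,\#\{n\in\mathcal N:n\equiv l\,(d)\}$ agrees, up to $O\bigl(x^{1-\lambda}(\log x)^{-A}\bigr)$ after summation over $d\le x^{\theta}$, with $\sum_{n\in\mathcal N,\ n\equiv l\,(d)}n^{-\lambda}$. It thus suffices to prove the Bombieri--Vinogradov-type bound $\sum_{d\le x^{\theta}}\max_{(l,d)=1}\bigl|\sum_{n\in\mathcal N,\ n\equiv l\,(d)}n^{-\lambda}-\varphi(d)^{-1}\sum_{n\in\mathcal N,\ (n,d)=1}n^{-\lambda}\bigr|\ll x^{1-\lambda}(\log x)^{-A}$. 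After a dyadic dissection of $p_1<p_2<p_3<p_4$ the weight $\mathbf 1_{\mathcal N}$ is a bounded combination of convolutions of a prime-indicator weight with the sifted weight $\mathbf 1_{(m,P(p_4))=1}$, each convolution having two factors of controlled size; hence the large sieve furnishes level of distribution $x^{1/2-\ve}$, far beyond $x^{\theta}$ (the smooth weight $n^{-\lambda}$ being disposed of by partial summation).

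\emph{Step 3 (the error terms --- the crux).} Fix $h$ with $1\le|h|\le H$. Exploit the multiplicative structure of $\mathcal N$ by factoring $n=uv$, with $u$ a sub-product of $p_1p_2p_3p_4$ and $v$ the remaining primes times $m$; because every $p_i\ge x^{1/14}$ (resp.\ $x^{1/12}$), the lengths of $u$ and $v$ can be placed in ranges suited to a Cauchy--Schwarz step followed by a large-sieve / van der Corput estimate. Detecting the congruence by additive characters, $\mathbf 1_{n\equiv l\,(d)}=d^{-1}\sum_{a\bmod d}e\bigl(a(n-l)/d\bigr)$, reduces each error term to bilinear exponential sums $\sum_u\sum_v\alpha_u\beta_v\,e\bigl(h\sqrt{uv-2}+auv/d\bigr)$; the phase $t\mapsto h\sqrt{t-2}$ has second derivative $\asymp hx^{-3/2}$, which, after Cauchy--Schwarz and differencing the long variable along the progression, yields cancellation, while Weil-type bounds handle the companion complete sums modulo $d$. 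Summing over $a\bmod d$, then over $d\le x^{\theta}$, and finally over $h\le H$ weighted by $\min(\delta,|h|^{-1})$ yields a total $\ll x^{1-\lambda}(\log x)^{-A}$, and this closes up precisely in the stated ranges $0<\lambda<\tfrac18$, $0<\theta<\tfrac14-\lambda$. For the $x^{1/12}$ variant nothing essential changes: raising the lower bound on the $p_i$ only widens the windows available for the splitting $n=uv$, so every numerical inequality used above --- which reduces in the end to $\theta+\lambda<\tfrac14$ together with the lower bound on the $p_i$ --- remains valid, and Cai's argument applies verbatim.

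\emph{Expected main obstacle.} The delicate point is Step~3: one must choose the decomposition $n=uv$ and the order of the Cauchy--Schwarz and differencing manoeuvres so that \emph{every} intermediate exponential sum --- over the long block, over the additive characters modulo $d$, over $d\le x^{\theta}$, and over $h\le H$ --- is estimated with a genuine power saving, uniformly, while keeping the accumulated loss below $x^{1-\lambda}(\log x)^{-A}$. This balancing act is exactly what pins the level of distribution at $\tfrac14-\lambda$, and it is the place where one must check that relaxing the prime cut-off from $x^{1/14}$ to $x^{1/12}$ breaks no constraint.
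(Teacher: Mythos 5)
The paper does not prove this lemma at all: it is quoted verbatim from Cai [\cite{Cai1+4}, Lemma 5], and the only added content is the unproved remark that the cut-off $x^{1/14}$ may be replaced by $x^{1/12}$. Your outline does correctly identify the architecture of Cai's argument --- Vinogradov's trigonometric-polynomial detection of $\{\sqrt{n-2}\}<(n-2)^{-\lambda}$, a Bombieri--Vinogradov-type treatment of the zeroth Fourier coefficient exploiting the convolution structure of $\mathcal N$, and bilinear exponential-sum estimates for the harmonics $1\leqslant|h|\leqslant H$ using the four prime factors $\geqslant x^{1/14}$ to manufacture type~I/type~II decompositions. So the strategy is the right one.

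However, as a proof the attempt has a genuine gap exactly where the lemma's content lives. All of the quantitative claims in Step~3 --- that the Cauchy--Schwarz/differencing manoeuvre, summed over $a\bmod d$, over $d\leqslant x^{\theta}$ and over $h\leqslant H$, ``closes up precisely in the stated ranges $0<\lambda<\frac18$, $0<\theta<\frac14-\lambda$'' --- are asserted, not derived; no exponent is ever computed, so nothing in the write-up explains why the level of distribution is $\frac14-\lambda$ rather than some other value, nor where the constraint $\lambda<\frac18$ enters. The invocation of ``Weil-type bounds for the companion complete sums modulo $d$'' is not a real ingredient here: the additive characters $e(a(n-l)/d)$ serve only to detect the congruence, $d$ is not prime, and no complete character sum amenable to Weil's bound arises. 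Finally, your justification of the $x^{1/12}$ variant (``raising the lower bound on the $p_i$ only widens the windows available for the splitting $n=uv$'') points in the wrong direction: increasing the lower bound on each $p_i$ from $x^{1/14}$ to $x^{1/12}$ makes the set of attainable sub-product sizes $u=p_{i_1}\cdots p_{i_k}$ \emph{coarser} (the reachable exponents jump by at least $\tfrac1{12}$ instead of $\tfrac1{14}$), so one must re-check that some sub-product still lands in every type~II window required by the bilinear estimates; this is precisely the verification the lemma's last sentence calls for, and neither you nor the paper supplies it.
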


\begin{lemma}\label{Buchstab}
Let
$$
z=x^{\frac{1}{u}}, \quad 0 \leqslant y \leqslant x, \quad Q(z)=\prod_{p<z} p.
$$
Then for $u > 1$, we have
$$
\sum_{\substack{x < n \leqslant x + y \\(n, Q(z))=1}} 1 = (1+o(1)) \omega(u) \frac{y}{\log z},
$$
where $\omega(u)$ is the Buchstab function determined by the following differential-difference equation
\begin{align*}
\begin{cases}
\omega(u)=\frac{1}{u}, & \quad 1 \leqslant u \leqslant 2, \\
(u \omega(u))^{\prime}= \omega(u-1), & \quad u \geqslant 2 .
\end{cases}
\end{align*}
\end{lemma}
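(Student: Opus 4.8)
The plan is an induction on $\lceil u\rceil$ with the Buchstab identity as the engine and the prime number theorem as the arithmetic input; the clean case is $y\asymp x$, where every prime count that arises lies in an interval of length comparable to its location. (For genuinely short $y$ one additionally needs a prime-number-theorem-in-short-intervals bound at each level of the recursion, which forces a lower bound on $y$; I return to this below.) Throughout write $\Phi(X,Y;w):=\#\{n:X<n\le X+Y,\ P^-(n)\ge w\}$, so the target is $\Phi(x,y;z)$ with $z=x^{1/u}$ and $\log z=(\log x)/u$.

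For the base case $1<u\le 2$ we have $z\ge x^{1/2}$, so an integer $n$ with $x<n\le x+y\le 2x$ and $P^-(n)\ge z$ has at most two prime factors, counted with multiplicity; the contribution of the $n$ with exactly two (both of size $\asymp x^{1/2}$) is $o(y/\log x)$ by an elementary count, so $\Phi(x,y;z)=\pi(x+y)-\pi(x)+o(y/\log x)=(1+o(1))\,y/\log x=(1+o(1))\,\omega(u)\,y/\log z$, using $\omega(u)=1/u$ on $[1,2]$. For the inductive step $u>2$, classify each $n$ counted by $\Phi(x,y;z)$ by its least prime factor $p=P^-(n)$: either $n$ is prime, or $z\le p\le(x+y)^{1/2}$ and, writing $n=pm$, one has $P^-(m)\ge p$ and $m\in(x/p,(x+y)/p]$. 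This yields
\[
\Phi(x,y;z)=\bigl(\pi(x+y)-\pi(x)\bigr)+\sum_{z\le p\le(x+y)^{1/2}}\Phi\!\left(\tfrac xp,\ \tfrac yp;\ p\right).
\]
In the inner term $x/p=x^{1-\vartheta}$ with $\vartheta:=(\log p)/(\log x)\in[1/u,1/2]$, so $p=(x/p)^{\vartheta/(1-\vartheta)}$ and the inner sieve has parameter $u'=(1-\vartheta)/\vartheta\in[1,u-1]$; by the induction hypothesis it is $(1+o(1))\,\omega(u')\,\tfrac{y/p}{\log p}$. Substituting, passing from the sum over primes to an integral via the prime number theorem (set $p=x^t$) and using $\log p=t\log x$, the main term becomes
\[
(1+o(1))\,\frac{y}{\log x}\left(1+\int_{1/u}^{1/2}\omega\!\left(\tfrac{1-t}{t}\right)\frac{dt}{t^{2}}\right).
\]
The substitution $s=(1-t)/t$ (so $t=1/(1+s)$, $dt/t^{2}=-ds$, and $t\in[1/u,1/2]\leftrightarrow s\in[1,u-1]$) turns the bracket into $1+\int_{1}^{u-1}\omega(s)\,ds$; since this and $u\,\omega(u)$ both equal $1$ at $u=2$ and have $u$-derivative $\omega(u-1)$ (the latter by the relation $(u\omega(u))'=\omega(u-1)$), they agree, so $\Phi(x,y;z)=(1+o(1))\,\tfrac{y}{\log x}\,u\,\omega(u)=(1+o(1))\,\omega(u)\,\tfrac{y}{\log z}$, which closes the induction.

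The main technical point — and the likely source of any difficulty — is the uniformity of the $o(1)$ across the $O(u)=O(1)$ recursion levels: the inner estimate has to hold uniformly for $p$ up to $(x+y)^{1/2}$, so for small $y$ one needs an effective prime number theorem for intervals of length $y/p$ about $x/p$, the tightest case being $p\approx x^{1/2}$, and this is precisely where a lower bound on $y$ (say $y$ at least a suitable fixed power of $x$) must be imposed; granting that, the accumulation of error terms and the contribution of prime powers are routine. In the relevant applications $y$ is comparable to $x$, so only the ordinary prime number theorem is needed and these issues do not arise.
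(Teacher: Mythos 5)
Your argument is essentially the proof the paper has in mind: the paper's entire justification is a citation to ``the Prime Number Theorem with Vinogradov's error term and the inductive arguments in [Harman's book, Chapter A.2],'' which is precisely the Buchstab-identity induction you carry out, with the substitution $s=(1-t)/t$ matching the integral to the differential--difference equation for $\omega$. Your caveat about uniformity for small $y$ is well taken --- as literally stated with $0\leqslant y\leqslant x$ the lemma requires such a restriction --- but in the paper's application the intervals have length comparable to their left endpoints, so the ordinary PNT with Vinogradov's error term suffices, exactly as you observe.
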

\begin{proof}
Lemma~\ref{Buchstab} can be proved by Prime Number Theorem with Vinogradov's error term and the inductive arguments in [\cite{HarmanBOOK}, Chapter A.2].
\end{proof}

\section{Proof of Theorem 1.4}
In this section, we define the function $\omega$ as $\omega(p)=0$ for $p = 2$ and $\omega(p)=\frac{p}{p-1}$ for other primes. Note that every odd, square-free $d$ can be written as $d = q_1 q_2 \cdots q_n$ with prime factors $q_i > 2$, we have
\begin{equation}
\frac{\omega(d)}{d} = \frac{\frac{q_1 q_2 \cdots q_n}{(q_1 - 1)(q_2 - 1) \cdots (q_n-1)}}{q_1 q_2 \cdots q_n} = \frac{1}{(q_1 - 1)(q_2 - 1) \cdots (q_n-1)} = \frac{1}{\varphi(d)}.
\end{equation}
Put
$$
D=x^{\frac{1}{4}-\lambda-\varepsilon}, \quad \mathcal{A}=\left\{p+2 : x<p \leqslant 2 x,\ \{\sqrt{p}\}<p^{-\lambda}\right\},
$$
$$
\mathcal{M} = \left\{ p_1 p_2 \cdots p_r m_1 : x^{\frac{1}{12}} \leqslant p_1 < p_2 < \cdots < p_r,\ x < p_1 p_2 \cdots p_r m_1 \leqslant 2 x,\ \left(m_1, P(p_r)\right) = 1 \right\},
$$
$$
\mathcal{B}^{1}=\left\{n-2 : n \in \mathcal{N},\ \{\sqrt{n - 2}\}<(n - 2)^{-\lambda} \right\},
$$
$$
\mathcal{B}^{2}=\left\{n-2 : n \in \mathcal{M},\ \{\sqrt{n - 2}\}<(n - 2)^{-\lambda} \right\}.
$$
Let $\gamma$ denote Euler's constant, $4 \leqslant r \leqslant 7$ and $S_r$ denote the number of prime solutions to the inequality (1) such that $p+2=P_r$, then we have
\begin{align}
\nonumber S_4 \geqslant&\ S\left(\mathcal{A}; \mathcal{P}, x^{\frac{1}{14}}\right) - \sum_{x^{\frac{1}{14}} \leqslant p_1 < p_2 < p_3 < p_4 < \left(\frac{2x}{p_1 p_2 p_3}\right)^{\frac{1}{2}}} S\left(\mathcal{A}_{p_1 p_2 p_3 p_4}; \mathcal{P}(p_1 p_2 p_3), p_4\right) + O\left(x^{\frac{13}{14}}\right) \\
=&\ S_{4, 1} - S_{4, 2} + O\left(x^{\frac{13}{14}}\right),
\end{align}
and
\begin{align}
\nonumber S_r \geqslant&\ S\left(\mathcal{A}; \mathcal{P}, x^{\frac{1}{12}}\right) - \sum_{x^{\frac{1}{12}} \leqslant p_1 < \cdots < p_r < \left(\frac{2x}{p_1 \cdots p_{r-1}}\right)^{\frac{1}{2}}} S\left(\mathcal{A}_{p_1 \cdots p_r}; \mathcal{P}(p_1 \cdots p_{r-1}), p_r\right) + O\left(x^{\frac{11}{12}}\right) \\
=&\ S_{r, 1} - S_{r, 2} + O\left(x^{\frac{11}{12}}\right)
\end{align}
for $5 \leqslant r \leqslant 7$.

In order to get a lower bound for $S_{r}$, we need to get a lower bound for $S_{r, 1}$ and an upper bound for $S_{r, 2}$. Now we ignore the presence of $\varepsilon$ for clarity.

\subsection{The evaluation of $S_{r, 1}$}
We take
\begin{equation}
X_{\mathcal{A}}=\frac{(2x)^{1-\lambda}-x^{1-\lambda}}{(1-\lambda)\log x}.
\end{equation}
Now, by (2) and the definition of $\eta(X_{\mathcal{A}}, d)$ in Lemma~\ref{sieve}, we have
\begin{align}
\nonumber \eta(X_{\mathcal{A}}, d) =&\ \left|\mathcal{A}_d \right|-\frac{\omega(d)}{d} X_\mathcal{A} \\
\nonumber =&\ \sum_{\substack{a \in \mathcal{A} \\ d \mid a}} 1-\frac{1}{\varphi(d)} X_\mathcal{A} \\
=&\ \sum_{\substack{x < p \leqslant 2x \\ \{\sqrt{p}\}<p^{-\lambda} \\ p \equiv -2 (\bmod d) }} 1-\frac{(2x)^{1-\lambda}-x^{1-\lambda}}{\varphi(d)(1-\lambda)\log x}.
\end{align}
By Lemma~\ref{mean} and (6), we can easily show that
\begin{equation}
\sum_{\substack{d \leqslant D \\ d \mid P(x^{\frac{1}{14}})}}\left|\eta(X_{\mathcal{A}}, d)\right| \ll \sum_{d \leqslant D}\mu^{2}(d) \left|\eta(X_{\mathcal{A}}, d)\right| \ll x^{1-\lambda}(\log x)^{-5}.
\end{equation}

We know that
\begin{align}
\nonumber W(z) =&\ \prod_{p \mid P(z)}\left(1-\frac{\omega(p)}{p}\right) \\
\nonumber =&\ \left(1 + O\left(\frac{1}{\log z} \right) \right) \frac{e^{-\gamma}}{\log z} \cdot \prod_{p}\left(1-\frac{\omega(p)}{p}\right) \left(1 - \frac{1}{p} \right)^{-1} \\
\nonumber =&\ \left(1 + O\left(\frac{1}{\log z} \right) \right) \frac{e^{-\gamma}}{\log z} \cdot 2 \prod_{p > 2}\left(\frac{p-2}{p-1}\right) \left(\frac{p}{p-1}\right) \\
=&\ (1+o(1)) 2 C_2 \frac{e^{-\gamma}}{\log z},
\end{align}
where
\begin{equation}
C_2:=\prod_{p>2}\left(1-\frac{1}{(p-1)^{2}}\right).
\end{equation}
Hence
\begin{equation}
W\left(x^{\frac{1}{14}}\right) = (1+o(1)) 2 C_2 \frac{e^{-\gamma}}{\frac{1}{14} \log x}.
\end{equation}

Then by Lemma~\ref{sieve} and (7)--(10), we have
\begin{align}
\nonumber S_{4, 1} \geqslant&\ X_{\mathcal{A}} W\left(x^{\frac{1}{14}}\right)\left\{f\left(\frac{\log D}{\log x^{\frac{1}{14}}}\right)+O\left(\frac{1}{\log ^{\frac{1}{3}} D}\right)\right\}-\sum_{\substack{d \leqslant D \\ d \mid P\left(x^{\frac{1}{14}}\right)}}\left|\eta(X_{\mathcal{A}}, d)\right| \\
\nonumber \geqslant&\ (1+o(1)) X_{\mathcal{A}} \cdot \frac{2 C_2 e^{-\gamma}}{\frac{1}{14} \log x} \cdot f\left(\frac{\frac{1}{4}-\lambda}{\frac{1}{14}}\right) \\
=&\ (1+o(1))\frac{2 C_2 X_{\mathcal{A}}}{\log D} \cdot \frac{e^{-\gamma}}{\left(\frac{1}{14} / \left( \frac{1}{4}-\lambda \right)\right)} f\left(\frac{\frac{1}{4}-\lambda}{\frac{1}{14}}\right).
\end{align}
Similarly, for $5 \leqslant r \leqslant 7$ we have
\begin{align}
\nonumber S_{r, 1} \geqslant&\ X_{\mathcal{A}} W\left(x^{\frac{1}{12}}\right)\left\{f\left(\frac{\log D}{\log x^{\frac{1}{12}}}\right)+O\left(\frac{1}{\log ^{\frac{1}{3}} D}\right)\right\}-\sum_{\substack{d \leqslant D \\ d \mid P\left(x^{\frac{1}{12}}\right)}}\left|\eta(X_{\mathcal{A}}, d)\right| \\
\nonumber \geqslant&\ (1+o(1)) X_{\mathcal{A}} \cdot \frac{2 C_2 e^{-\gamma}}{\frac{1}{12} \log x} \cdot f\left(\frac{\frac{1}{4}-\lambda}{\frac{1}{12}}\right) \\
=&\ (1+o(1))\frac{2 C_2 X_{\mathcal{A}}}{\log D} \cdot \frac{e^{-\gamma}}{\left(\frac{1}{12} / \left( \frac{1}{4}-\lambda \right)\right)} f\left(\frac{\frac{1}{4}-\lambda}{\frac{1}{12}}\right).
\end{align}

\subsection{The evaluation of $S_{r, 2}$} We first consider the case $r = 4$. By Chen's switching principle \cite{Chen1973}, we have
\begin{align}
\nonumber S_{4, 2} =&\ \sum_{x^{\frac{1}{14}} \leqslant p_1 < p_2 < p_3 < p_4 < \left(\frac{2x}{p_1 p_2 p_3}\right)^{\frac{1}{2}}} S\left(\mathcal{A}_{p_1 p_2 p_3 p_4}; \mathcal{P}(p_1 p_2 p_3), p_4\right) \\
=&\ S\left(\mathcal{B}^{1}; \mathcal{P}, (2x)^{\frac{1}{2}}\right).
\end{align}
The equation (13) comes from a simple observation: $S_{r, 2}$ counts the number of primes $p$ such that $p + 2 = n$ with $n \in \mathcal{N}$. Hence we have $p = n - 2$, and we can count "$n - 2$ that is prime" instead of "primes of the form $n - 2$". Now $S\left(\mathcal{B}^{1}; \mathcal{P}, (2x)^{\frac{1}{2}}\right)$ counts $n - 2$ with all prime factors larger than $(2x)^{\frac{1}{2}}$. If $n - 2$ has two or more prime factors, then their product will larger than $2x$, leading to a contradiction. Thus, the counted $n - 2$ must be prime, and the two sums are equal.

Since we have
$$
S\left(\mathcal{B}^{1}; \mathcal{P}, z\right) \leqslant S\left(\mathcal{B}^{1}; \mathcal{P}, w\right)
$$
for $w \leqslant z$, we have
\begin{equation}
S_{4, 2} = S\left(\mathcal{B}^{1}; \mathcal{P}, (2x)^{\frac{1}{2}}\right) \leqslant S\left(\mathcal{B}^{1}; \mathcal{P}, D^{\frac{1}{2}}\right).
\end{equation}

We take
\begin{equation}
X_{\mathcal{B}^{1}} = \sum_{n \in \mathcal{N}} n^{-\lambda}.
\end{equation}
Now, by (2) and the definition of $\eta(X_{\mathcal{A}}, d)$ in Lemma~\ref{sieve}, we have
\begin{align}
\nonumber \eta(X_{\mathcal{B}^{1}}, d) =&\ \left|\mathcal{B}^{1}_d \right|-\frac{\omega(d)}{d} X_{\mathcal{B}^{1}} \\
\nonumber =&\ \sum_{\substack{b \in \mathcal{B}^{1} \\ d \mid b}} 1-\frac{1}{\varphi(d)} X_{\mathcal{B}^{1}} \\
\nonumber =&\ \sum_{\substack{n \in \mathcal{N} \\ n \equiv 2 (\bmod d) \\ \{\sqrt{n - 2}\}<(n - 2)^{-\lambda} }} 1-\frac{1}{\varphi(d)} \sum_{n \in \mathcal{N}} n^{-\lambda} \\
\nonumber =&\ \sum_{\substack{n \in \mathcal{N} \\ n \equiv 2 (\bmod d) \\ \{\sqrt{n - 2}\}<(n - 2)^{-\lambda} }} 1-\frac{1}{\varphi(d)} \sum_{\substack{n \in \mathcal{N} \\ (n, d) = 1 }} n^{-\lambda} \\
\nonumber & + \sum_{\substack{n \in \mathcal{N} \\ n \equiv 2 (\bmod d) \\ \{\sqrt{n - 2}\}<(n - 2)^{-\lambda} }} 1-\frac{1}{\varphi(d)} \sum_{\substack{n \in \mathcal{N} \\ (n, d) > 1 }} n^{-\lambda} \\
=&\ \eta_{1}(X_{\mathcal{B}^{1}}, d) + \eta_{2}(X_{\mathcal{B}^{1}}, d).
\end{align}
Applying Lemma~\ref{newmean} directly, we can show that
\begin{equation}
\sum_{\substack{d \leqslant D \\ d \mid P\left(D^{\frac{1}{2}}\right)}}\left|\eta_{1}(X_{\mathcal{B}^{1}}, d)\right| \ll \sum_{d \leqslant D}\mu^{2}(d) \left|\eta_{1}(X_{\mathcal{B}^{1}}, d)\right| \ll x^{1-\lambda}(\log x)^{-5}.
\end{equation}
The sum of $\eta_{2}(X_{\mathcal{B}^{1}}, d)$ can be bounded trivially:
\begin{equation}
\sum_{\substack{d \leqslant D \\ d \mid P\left(D^{\frac{1}{2}}\right)}}\left|\eta_{2}(X_{\mathcal{B}^{1}}, d)\right| \ll x^{1-\frac{1}{14}}\log x.
\end{equation}
When $\lambda = \frac{1}{15.1}$, we have
\begin{align}
\nonumber \sum_{\substack{d \leqslant D \\ d \mid P\left(D^{\frac{1}{2}}\right)}}\left|\eta(X_{\mathcal{B}^{1}}, d)\right| \ll&\ x^{1-\lambda}(\log x)^{-5} + x^{1-\frac{1}{14}}\log x \\
\ll&\ x^{1-\lambda}(\log x)^{-5}.
\end{align}

Then by Lemma~\ref{sieve}, (8) and (19), we have
\begin{align}
\nonumber S_{4, 2} \leqslant&\ X_{\mathcal{B}^{1}} W\left(D^{\frac{1}{2}}\right)\left\{F\left(\frac{\log D}{\log D^{\frac{1}{2}}}\right)+O\left(\frac{1}{\log ^{\frac{1}{3}} D}\right)\right\}+\sum_{\substack{d \leqslant D \\ d \mid P\left(D^{\frac{1}{2}}\right)}}\left|\eta(X_{\mathcal{B}^{1}}, d)\right| \\
\nonumber \leqslant&\ (1+o(1)) X_{\mathcal{B}^{1}} \cdot \frac{2 C_2 e^{-\gamma}}{\frac{1}{2} \log D} \cdot F(2) \\
=&\ (1+o(1))\frac{4 C_2 X_{\mathcal{B}^{1}}}{\log D}.
\end{align}

By Lemma~\ref{Buchstab}, Prime Number Theorem and integration by parts we have
\begin{equation}
X_{\mathcal{B}^{1}} = (1+o(1)) X_{\mathcal{A}} T_4,
\end{equation}
where
\begin{equation}
T_4 = \int_{\frac{1}{14}}^{\frac{1}{5}} \int_{t_1}^{\frac{1-t_1}{4}} \int_{t_2}^{\frac{1-t_1-t_2}{3}} \int_{t_3}^{\frac{1-t_1-t_2-t_3}{2}} \frac{\omega \left( \frac{1-t_1-t_2-t_3-t_4}{t_4} \right)}{t_1 t_2 t_3 t_4^2} d t_4 d t_3 d t_2 d t_1,
\end{equation}
where $\omega(u)$ is defined in Lemma~\ref{Buchstab}.

Combining (3), (11), (20) and (21), we have
\begin{equation}
S_4 \geqslant (1+o(1))\frac{2 C_2 X_{\mathcal{A}}}{\log D} \left( \frac{e^{-\gamma}}{\left(\frac{1}{14} / \left( \frac{1}{4}-\lambda \right)\right)} f\left(\frac{\frac{1}{4}-\lambda}{\frac{1}{14}}\right) - 2 T_4 \right).
\end{equation}
Hence we only need
\begin{equation}
\frac{e^{-\gamma}}{\left(\frac{1}{14} / \left( \frac{1}{4}-\lambda \right)\right)} f\left(\frac{\frac{1}{4}-\lambda}{\frac{1}{14}}\right) - 2 T_4 > 0.
\end{equation}
Numerical calculation shows that (24) holds for $\lambda = \frac{1}{15.1}$, hence $M(\frac{1}{15.1}, 4)$ holds true.

Similarly, for the case $5 \leqslant r \leqslant 7$ we have
\begin{align}
\nonumber S_{r, 2} =&\ \sum_{x^{\frac{1}{12}} \leqslant p_1 < \cdots < p_r < \left(\frac{2x}{p_1 \cdots p_{r-1}}\right)^{\frac{1}{2}}} S\left(\mathcal{A}_{p_1 \cdots p_r}; \mathcal{P}(p_1 \cdots p_{r-1}), p_r\right) \\
=&\ S\left(\mathcal{B}^{2}; \mathcal{P}, (2x)^{\frac{1}{2}}\right) \leqslant S\left(\mathcal{B}^{2}; \mathcal{P}, D^{\frac{1}{2}}\right).
\end{align}

We take
\begin{equation}
X_{\mathcal{B}^{2}} = \sum_{n \in \mathcal{M}} n^{-\lambda}.
\end{equation}
Now, by (2) and the definition of $\eta(X_{\mathcal{A}}, d)$ in Lemma~\ref{sieve}, we have
\begin{align}
\nonumber \eta(X_{\mathcal{B}^{2}}, d) =&\ \left|\mathcal{B}^{2}_d \right|-\frac{\omega(d)}{d} X_{\mathcal{B}^{2}} \\
\nonumber =&\ \sum_{\substack{b \in \mathcal{B}^{2} \\ d \mid b}} 1-\frac{1}{\varphi(d)} X_{\mathcal{B}^{2}} \\
\nonumber =&\ \sum_{\substack{n \in \mathcal{M} \\ n \equiv 2 (\bmod d) \\ \{\sqrt{n - 2}\}<(n - 2)^{-\lambda} }} 1-\frac{1}{\varphi(d)} \sum_{n \in \mathcal{M}} n^{-\lambda} \\
\nonumber =&\ \sum_{\substack{n \in \mathcal{M} \\ n \equiv 2 (\bmod d) \\ \{\sqrt{n - 2}\}<(n - 2)^{-\lambda} }} 1-\frac{1}{\varphi(d)} \sum_{\substack{n \in \mathcal{M} \\ (n, d) = 1 }} n^{-\lambda} \\
\nonumber & + \sum_{\substack{n \in \mathcal{M} \\ n \equiv 2 (\bmod d) \\ \{\sqrt{n - 2}\}<(n - 2)^{-\lambda} }} 1-\frac{1}{\varphi(d)} \sum_{\substack{n \in \mathcal{M} \\ (n, d) > 1 }} n^{-\lambda} \\
=&\ \eta_{1}(X_{\mathcal{B}^{2}}, d) + \eta_{2}(X_{\mathcal{B}^{2}}, d).
\end{align}
Taking $m = p_5 \cdots p_r m_1$ in Lemma~\ref{newmean}, we know that conditions
$$
p_4 < p_5 < \cdots < p_r \quad \text{and} \quad \left(m, P(p_4)\right)
$$
are fulfilled. By Lemma~\ref{newmean} (with $x^{\frac{1}{14}}$ replaced by $x^{\frac{1}{12}}$), we can show that
\begin{equation}
\sum_{\substack{d \leqslant D \\ d \mid P\left(D^{\frac{1}{2}}\right)}}\left|\eta_{1}(X_{\mathcal{B}^{2}}, d)\right| \ll \sum_{d \leqslant D}\mu^{2}(d) \left|\eta_{1}(X_{\mathcal{B}^{2}}, d)\right| \ll x^{1-\lambda}(\log x)^{-5}.
\end{equation}
The sum of $\eta_{2}(X_{\mathcal{B}^{2}}, d)$ can be bounded trivially:
\begin{equation}
\sum_{\substack{d \leqslant D \\ d \mid P\left(D^{\frac{1}{2}}\right)}}\left|\eta_{2}(X_{\mathcal{B}^{2}}, d)\right| \ll x^{1-\frac{1}{12}}\log x.
\end{equation}
When $\lambda < \frac{1}{12}$, we have
\begin{align}
\nonumber \sum_{\substack{d \leqslant D \\ d \mid P\left(D^{\frac{1}{2}}\right)}}\left|\eta(X_{\mathcal{B}^{1}}, d)\right| \ll&\ x^{1-\lambda}(\log x)^{-5} + x^{1-\frac{1}{12}}\log x \\
\ll&\ x^{1-\lambda}(\log x)^{-5}.
\end{align}

Then by Lemma~\ref{sieve}, (8) and (30), for $5 \leqslant r \leqslant 7$ we have
\begin{align}
\nonumber S_{r, 2} \leqslant&\ X_{\mathcal{B}^{2}} W\left(D^{\frac{1}{2}}\right)\left\{F\left(\frac{\log D}{\log D^{\frac{1}{2}}}\right)+O\left(\frac{1}{\log ^{\frac{1}{3}} D}\right)\right\}+\sum_{\substack{d \leqslant D \\ d \mid P\left(D^{\frac{1}{2}}\right)}}\left|\eta(X_{\mathcal{B}^{2}}, d)\right| \\
\nonumber \leqslant&\ (1+o(1)) X_{\mathcal{B}^{2}} \cdot \frac{2 C_2 e^{-\gamma}}{\frac{1}{2} \log D} \cdot F(2) \\
=&\ (1+o(1))\frac{4 C_2 X_{\mathcal{B}^{2}}}{\log D}.
\end{align}

Similar to the case $r = 4$, by Lemma~\ref{Buchstab}, Prime Number Theorem and integration by parts we have
\begin{equation}
X_{\mathcal{B}^{2}} = (1+o(1)) X_{\mathcal{A}} T_r,
\end{equation}
where
\begin{equation}
T_r = \int_{\frac{1}{12}}^{\frac{1}{r+1}} \int_{t_1}^{\frac{1-t_1}{r}} \cdots \int_{t_{r-1}}^{\frac{1-t_1-\cdots-t_{r-1}}{2}} \frac{\omega \left( \frac{1-t_1-\cdots-t_r}{t_r} \right)}{t_1 t_2 \cdots t_{r-1} t_r^2} d t_r \cdots d t_1.
\end{equation}

Combining (4), (12), (31) and (32), for $5 \leqslant r \leqslant 7$ we have
\begin{equation}
S_r \geqslant (1+o(1))\frac{2 C_2 X_{\mathcal{A}}}{\log D} \left( \frac{e^{-\gamma}}{\left(\frac{1}{12} / \left( \frac{1}{4}-\lambda \right)\right)} f\left(\frac{\frac{1}{4}-\lambda}{\frac{1}{12}}\right) - 2 T_r \right).
\end{equation}
Hence we only need
\begin{equation}
\frac{e^{-\gamma}}{\left(\frac{1}{12} / \left( \frac{1}{4}-\lambda \right)\right)} f\left(\frac{\frac{1}{4}-\lambda}{\frac{1}{12}}\right) - 2 T_r > 0.
\end{equation}
When $r= 5, 6, 7$, numerical calculation shows that
$$
\frac{e^{-\gamma}}{\left(\frac{1}{12} / \left( \frac{1}{4}-\frac{1}{12.4} \right)\right)} f\left(\frac{\frac{1}{4}-\frac{1}{12.4}}{\frac{1}{12}}\right) - 2 T_5 > 0,
$$
$$
\frac{e^{-\gamma}}{\left(\frac{1}{12} / \left( \frac{1}{4}-\frac{1}{12.03} \right)\right)} f\left(\frac{\frac{1}{4}-\frac{1}{12.03}}{\frac{1}{12}}\right) - 2 T_6 > 0
$$
and
$$
\frac{e^{-\gamma}}{\left(\frac{1}{12} / \left( \frac{1}{4}-\frac{1}{12.01} \right)\right)} f\left(\frac{\frac{1}{4}-\frac{1}{12.01}}{\frac{1}{12}}\right) - 2 T_7 > 0.
$$
Now Theorem~\ref{t1} is proved. We remark that for positive $\lambda$, we have
\begin{equation}
f\left(\frac{\frac{1}{4}-\lambda}{\lambda}\right) > 0 \quad \text{or} \quad \frac{\frac{1}{4}-\lambda}{\lambda} > 2
\end{equation}
only when $\lambda < \frac{1}{12}$, so $\lambda = \frac{1}{12.01}$ is rather near the limit point.

\bibliographystyle{plain}
\bibliography{bib}
\end{document}